\newtheorem{theorem}{Теорема}[section]
\newtheorem{lemma}{Лема}[section]
\theoremstyle{definition}
\renewcommand{\abstract}{\textbf{Анотація. }\medskip}
\numberwithin{equation}{section}
\begin{document}
\Large

\noindent УДК 517.5

\bigskip\noindent
{\bf В.В. Савчук$^{1}$, С.О. Чайченко$^{2}$, А.Л. Шидліч$^{1, 3}$}

$^{1\ }$Інститут математики Національної академії наук України, м. Київ

$^{2\ }$Державний вищий навчальний заклад “Донбаський державний педагогічний університет”, м. Слов'янськ

$^{3\ }$Національний університет біоресурсів і природокористування України, м. Київ

\bigskip\noindent
{\bf  ЕКСТРЕМАЛЬНІ ЗАДАЧІ ВАГОВОГО НАБЛИЖЕННЯ \\ НА ДІЙСНІЙ ОСІ }

\bigskip
{\noindent }
\bigskip

{ \bf Ключові слова:} раціональна функція; система Такенаки-Мальмквіста, найкраще наближення.
\bigskip

{ \bf AMS:} 46E30, 42A10,41A17,41A20,41A25,41A27, 41A30.
\bigskip

\textbf{Анотація.} 
 Обчислено точне значення та знайдено поліном найкращого вагового поліноміального наближення 
 ядер вигляду $\frac {A+Bx}{(x^2+\lambda^2)^2}$, де $A,B\in {\mathbb R}$, $\lambda>0$, 
 в середньо-квадратичній метриці.

\section{Вступ.} В роботі М.\,М.~Джрбашяна \cite{Djrbashan_1974} було розвинено метод, який дозволяє отримувати розв'язки екстремальних задач про найкраще вагове поліноміальне наближення ядра Коші $\frac 1{\omega-x}$, $\mathop{\rm Im} \omega\not=0$,
на всій дійсній осі ${\mathbb R}$ у рівномірній та середньо квадратичній метриці, у випадку, коли вага є поліномом із заданими полюсами. Цей метод, зокрема, використовував зображення даного ядра за допомогою відрізків його ряду Фур'є за ортогональною на всій осі системою раціональних функцій $\{\Phi_n(z)\}_{n=1}^\infty$ із заданою множиною полюсів $\{a_k\}_{k=1}^{\infty}$, $(\mathop{\rm Im} a_k>0)$.

В роботі А.\,А.~Восканяна  \cite{Voskanian}, використовуючи даний метод та деякі інші результати з
\cite{Djrbashan_1974},     було розв'язано аналогічні задачі про найкраще вагове поліноміальне наближення ядер вигляду $\frac {A+Bx}{x^2+\lambda^2}$, $A,B\in {\mathbb R}$, $\lambda>0$, які були вперше розглянуті С.\,Н.~Бернштейном.

В цій роботі знаходяться розв'язки аналогічних екстремальних задач про найкраще вагове поліноміальне наближення
ядер вигляду $\frac {A+Bx}{(x^2+\lambda^2)^2}$, $A,B\in {\mathbb R}$, $\lambda>0$, в середньо-квадратичній метриці.

 {\bf 1. Ортогональна система раціональних функцій на дійсній осі.}
 Нехай  ${\bf a}:=\{a_k\}_{k=1}^{\infty}$, $(\mathop{\rm Im} a_k>0)$ --- довільна послідовність комплексних чисел,
  які лежать в верхній півплощині $\mathbb C_+:=\{z\in\mathbb C : \mathop{\rm Im}z>0\}$. Розглянемо систему функцій
\begin{equation}\label{Phi-system} 
    \Phi_1(z):=\frac{\sqrt{\mathop{\rm Im} a_1}}{z-\overline a_1}, \quad
    \Phi_n(z):=\frac{\sqrt{\mathop{\rm Im} a_n}}{z-\overline a_n}B_n(z),~n=2,3,\ldots,
\end{equation}
де
$$
    B_1(z):=1, \quad B_n(z):=\prod_{k=1}^{n-1}\chi_k
    \frac{z-a_k}{z-\overline a_k},~
    \chi_k:=\frac{|1+a^2_k|}{1+a^2_k},~n=2,3,\ldots.
$$
--- $n$-добуток Бляшке з нулями в точках $a_k,$ $k=1,...n$.

Систему функцій $\{\Phi_n(z)\}_{n=1}^\infty$ запровадив М.М.~Джрбашян \cite{Djrbashan_1974} за аналогією, як це зробили С.~Такенака і Ф.~Мальмквіст (див.  \cite{Takenaka, Malmquist}) у випадку простору Гарді $H_2$ в одиничному крузі. Зокрема, в \cite{Djrbashan_1974} показано, що система $\{\Phi_n(z)\}_{n=1}^\infty$ є ортонормованою на дійсній осі $\mathbb R$, тобто
$$
    \frac{1}{\pi} \int\limits_{-\infty}^\infty
    \Phi_n (x)\overline{\Phi_m(x)}~\mathrm{d}x=
    \left\{\begin{matrix}0, \hfill& n\not=m,\\
                    1,\hfill& n=m,
                    \end{matrix}\right.\quad n,m=1,2,\ldots.
$$
і справджується лема.

\begin{lemma}\label{L-Dzhrbsh-ident} Для довільних $z, \zeta\in\mathbb C,$ $z\not=\overline\zeta$ і $n,m= 1,2,\ldots,$
 справджується рівність
\begin{equation}\label{Dzhrbashyan-identity} 
    \frac{1}{2{\mathrm i}(\overline\zeta-z)} =\sum_{k=1}^{n}\overline{\Phi_k(\zeta)}\Phi_k(z)+
    \frac{\overline{B_n(\zeta)}B_n(z)}{2{\mathrm i}(\overline\zeta-z)}.
\end{equation}
\end{lemma}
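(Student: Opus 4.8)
The plan is to prove \eqref{Dzhrbashyan-identity} by telescoping the series, so that the entire statement reduces to a single elementary identity for one Blaschke factor. Abbreviate the remainder term by
$$
    R_n:=\frac{\overline{B_n(\zeta)}B_n(z)}{2{\mathrm i}(\overline\zeta-z)},\qquad n=1,2,\ldots,
$$
and observe that, since $B_1\equiv 1$, the term $R_1=\dfrac{1}{2{\mathrm i}(\overline\zeta-z)}$ is exactly the left-hand side of \eqref{Dzhrbashyan-identity}. It therefore suffices to establish the one-step relation
\begin{equation}\label{one-step}
    R_n-R_{n+1}=\overline{\Phi_n(\zeta)}\Phi_n(z),\qquad n=1,2,\ldots,
\end{equation}
since summing \eqref{one-step} over $k=1,\dots,n$ gives $\sum_{k=1}^{n}\overline{\Phi_k(\zeta)}\Phi_k(z)=R_1-R_{n+1}$, and transposing the remaining term yields \eqref{Dzhrbashyan-identity}.

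To verify \eqref{one-step} I would compute its two sides separately. Since $\sqrt{\mathop{\rm Im}a_n}$ is real and $\overline{\overline a_n}=a_n$, definition \eqref{Phi-system} gives
$$
    \overline{\Phi_n(\zeta)}\Phi_n(z)
    =\frac{\mathop{\rm Im}a_n}{(\overline\zeta-a_n)(z-\overline a_n)}\,\overline{B_n(\zeta)}B_n(z).
$$
For the difference $R_n-R_{n+1}$ I would use the recursion $B_{n+1}(z)=B_n(z)\,\chi_n\dfrac{z-a_n}{z-\overline a_n}$ together with the unimodularity of $\chi_n$ (immediate from $\chi_n=|1+a_n^2|/(1+a_n^2)$, so that $\overline{\chi_n}\chi_n=1$), which gives
$$
    \overline{B_{n+1}(\zeta)}B_{n+1}(z)
    =\overline{B_n(\zeta)}B_n(z)\,
    \frac{(\overline\zeta-\overline a_n)(z-a_n)}{(\overline\zeta-a_n)(z-\overline a_n)}.
$$
Consequently $R_n-R_{n+1}$ equals $\overline{B_n(\zeta)}B_n(z)$ multiplied by
$$
    \frac{1}{2{\mathrm i}(\overline\zeta-z)}
    \left(1-\frac{(\overline\zeta-\overline a_n)(z-a_n)}{(\overline\zeta-a_n)(z-\overline a_n)}\right).
$$

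The single genuine computation, and the step I expect to carry the whole argument, is the cancellation inside this bracket. Writing the difference over the common denominator $(\overline\zeta-a_n)(z-\overline a_n)$, the numerator becomes $(\overline\zeta-a_n)(z-\overline a_n)-(\overline\zeta-\overline a_n)(z-a_n)$; upon expansion the products $\overline\zeta z$ and $a_n\overline a_n$ cancel, and the remaining terms collapse to $(a_n-\overline a_n)(\overline\zeta-z)=2{\mathrm i}\,(\mathop{\rm Im}a_n)(\overline\zeta-z)$. The factor $\overline\zeta-z$ then cancels the one in the denominator $2{\mathrm i}(\overline\zeta-z)$, and what survives is exactly $\dfrac{\mathop{\rm Im}a_n}{(\overline\zeta-a_n)(z-\overline a_n)}\,\overline{B_n(\zeta)}B_n(z)$, i.e.\ the expression for $\overline{\Phi_n(\zeta)}\Phi_n(z)$ found above. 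This proves \eqref{one-step}, and with it the lemma; the hypothesis $z\neq\overline\zeta$ enters only to keep every denominator $2{\mathrm i}(\overline\zeta-z)$ nonzero.
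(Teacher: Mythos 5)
Your telescoping argument is sound, and since the paper itself does not prove this lemma --- it only remarks that \eqref{Dzhrbashyan-identity} is a Christoffel--Darboux analogue and refers the reader to the cited work of Savchuk and Chaichenko for a proof --- your argument is a genuine self-contained addition rather than a variant of anything in the text. The one-step computation is correct: with $R_m:=\overline{B_m(\zeta)}B_m(z)/\bigl(2\mathrm{i}(\overline\zeta-z)\bigr)$ one indeed has $R_m-R_{m+1}=\overline{\Phi_m(\zeta)}\Phi_m(z)$, the numerator collapsing to $(a_m-\overline a_m)(\overline\zeta-z)=2\mathrm{i}\,(\mathop{\rm Im}a_m)(\overline\zeta-z)$ exactly as you say.

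The one point you must address is the final line. Summing your one-step relation over $k=1,\dots,n$ gives $\sum_{k=1}^{n}\overline{\Phi_k(\zeta)}\Phi_k(z)=R_1-R_{n+1}$, so what you have actually proved is
\[
    \frac{1}{2\mathrm{i}(\overline\zeta-z)}=\sum_{k=1}^{n}\overline{\Phi_k(\zeta)}\Phi_k(z)+\frac{\overline{B_{n+1}(\zeta)}B_{n+1}(z)}{2\mathrm{i}(\overline\zeta-z)},
\]
with $B_{n+1}$, not $B_n$, in the remainder; transposing does not yield \eqref{Dzhrbashyan-identity} verbatim. This is not a defect of your method but an off-by-one inconsistency in the paper itself: with $B_n$ defined as the product over $k\le n-1$ (so $B_1\equiv1$), the stated identity already fails at $n=1$, since it would force $\overline{\Phi_1(\zeta)}\Phi_1(z)=0$. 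Your telescoping in fact pins down the correct normalization --- the remainder must carry the Blaschke product with $n$ factors --- and that is also how $B_n$ is used later in the text (e.g.\ $\overline{B_n(t)}=\prod_{k=1}^{n}\overline\chi_k\,\tau_n(t)/\nu_n(t)$ in the proof of Lemma~\ref{lemma-R-S}). So state explicitly that you prove the identity with the $n$-factor Blaschke product in the remainder and note that this is the version consistent with the rest of the paper; as written, your closing sentence asserts an equality with a mismatched index.
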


Рівність (\ref{Dzhrbashyan-identity}) є аналогом формули Христофеля-Дарбу для ортогональних многочленів і відома в літературі під назвою тотожності М.М.~Джрбашяна. Доведення леми \ref{L-Dzhrbsh-ident} також можна знайти в \cite{Savchuk_CH_umg_2014}.

Аналітична у півплощині $\mathbb{C}_+$ функція $f(z)$ належить класу Гарді $H_2(\mathbb{C}_+),$ якщо
\[
    \|f\|_{H_2(\mathbb{C}_+)}:= \sup_{y>0}\left(\frac{1}{\pi} \int_{-\infty}^\infty|f(x+iy)|^2~\mathrm{d}x\right)^{1/2}<\infty.
\]

Відомо \cite[теорема 2]{Krilov-1939} (див. також \cite[с. 291]{Mashreghi-Reperes-Theorem}), що кожна функція $f$ з простору $H_2(\mathbb{C}_+)$ має майже скрізь на дійсній осі $\mathbb R=\partial\mathbb C_+$ недотичні граничні значення
$$
    f(x)=\lim_{y \to 0} f(x+{\mathrm i}y),
$$
(за якими залишимо те ж саме позначення $f$), які належать простору $L_2(\mathbb R),$ причому
\[
    \|f\|_{H_2(\mathbb{C}_+)}=
    \|f\|_2:=\left(\frac{1}{\pi}\int_{-\infty}^\infty|f(t)|^2~\mathrm{d}t\right)^{1/2}
\]
і справджується інтегральна формула Коші
\begin{equation}\label{Coushy-form}
    f(z)=\frac{1}{2\pi {\mathrm i}} \int_{-\infty}^\infty \frac{f(t)}{t-z}~\mathrm{d}t,
    \quad \forall z\in\mathbb C_+.
\end{equation}

Наступне твердження  є наслідком поєднання леми \ref{L-Dzhrbsh-ident} і формули (\ref{Coushy-form}) і вперше було отримане М.М.~Джрбашяном у праці \cite{Djrbashan_1974}.

\begin{theorem}\label{T-represent-f}
Якщо функція $f \in H_2(\mathbb{C}_+),$ то для довільного $n \in \mathbb{N}$ справджується тотожність
\begin{equation}\label{represent-f}
    f(z)=\sum_{k=1}^n c_k(f)\Phi_k(z)+ \frac{B_n(z)}{2\pi {\mathrm i}}\int_{-\infty}^\infty \frac{f(t)\overline{B_n(t)}}{t-z}~\mathrm{d}t, \quad \forall z\in\mathbb C_+,
\end{equation}
де
$$
    c_k(f):=\frac{1}{\pi}\int_{-\infty}^\infty
    f(t) \overline{\Phi_k(t)}~\mathrm{d}t, \quad k=0,1,\ldots~.
$$
\end{theorem}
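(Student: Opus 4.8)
The plan is to start from the Cauchy representation (\ref{Coushy-form}), valid for every $f \in H_2(\mathbb{C}_+)$ by Krylov's theorem, and to substitute into its kernel the expansion supplied by Lemma~\ref{L-Dzhrbsh-ident}. First I would specialize the Dzhrbashyan identity (\ref{Dzhrbashyan-identity}) to $\zeta = t \in \mathbb{R}$. Then $\overline\zeta = t$, and since $z \in \mathbb{C}_+$ forces $z \ne t$, the restriction $z \ne \overline\zeta$ of the lemma holds automatically; multiplying the identity through by $2\mathrm{i}$ turns it into a decomposition of the Cauchy kernel,
\begin{equation}\label{kernel-split}
    \frac{1}{t-z} = 2\mathrm{i}\sum_{k=1}^{n}\overline{\Phi_k(t)}\,\Phi_k(z) + \frac{\overline{B_n(t)}\,B_n(z)}{t-z}, \qquad t \in \mathbb{R},\ z \in \mathbb{C}_+.
\end{equation}

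Next I would insert (\ref{kernel-split}) into (\ref{Coushy-form}) and break the integral into the finite-sum term and the remainder term. In the sum term the prefactor $\frac{1}{2\pi\mathrm{i}}$ combines with the $2\mathrm{i}$ to give $\frac{1}{\pi}$, and taking the finite sum and the factors $\Phi_k(z)$ outside the integral produces exactly $\sum_{k=1}^{n}\Phi_k(z)\cdot\frac{1}{\pi}\int_{-\infty}^{\infty}f(t)\overline{\Phi_k(t)}\,\mathrm{d}t = \sum_{k=1}^{n}c_k(f)\,\Phi_k(z)$, by the very definition of $c_k(f)$. The remainder term, after pulling out $B_n(z)$, becomes $\frac{B_n(z)}{2\pi\mathrm{i}}\int_{-\infty}^{\infty}\frac{f(t)\overline{B_n(t)}}{t-z}\,\mathrm{d}t$. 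Adding the two pieces reproduces (\ref{represent-f}) verbatim.

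The step needing genuine care is the legitimacy of these operations: absolute convergence of each integral and the interchange of the (finite) sum with integration. The boundary values of $f$ belong to $L_2(\mathbb{R})$ and each $\Phi_k$ is in $L_2(\mathbb{R})$ (being part of the orthonormal system), so every coefficient integral converges absolutely by the Cauchy--Bunyakovskii--Schwarz inequality, and the interchange is trivial since the sum is finite. For the remainder I would use that $|B_n(t)| = 1$ for real $t$ --- each factor $\frac{t-a_k}{t-\overline a_k}$ is unimodular on $\mathbb{R}$, as is each constant $\chi_k$ --- so the integrand is dominated by $|f(t)|/|t-z|$, where $\frac{1}{t-z} \in L_2(\mathbb{R})$ for fixed $z \in \mathbb{C}_+$; a further application of the same inequality gives absolute convergence. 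With these estimates the three identities above combine without obstruction, which completes the argument.
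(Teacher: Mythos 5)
Your proposal is correct and follows exactly the route the paper intends: it states Theorem~\ref{T-represent-f} as ``a simple consequence of Lemma~\ref{L-Dzhrbsh-ident} and formula~(\ref{Coushy-form})'' without writing out the details, and the same specialization $\zeta=t\in\mathbb{R}$ of the identity~(\ref{Dzhrbashyan-identity}) inserted into the Cauchy integral is carried out verbatim inside the proof of Lemma~\ref{lemma-R-S}. Your added justification of convergence (via $f\in L_2(\mathbb{R})$, $\Phi_k\in L_2(\mathbb{R})$, $|B_n(t)|=1$ on $\mathbb{R}$ and the Cauchy--Bunyakovskii--Schwarz inequality) is sound and fills in precisely what the paper leaves implicit.
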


Нехай $A$ та $B$ -- довільні комплексні числа, $\lambda>0$ і $n$ -- будь-яке натуральне число. Покладемо
\begin{equation}\label{pi-tau}
    \nu_n(z):=\nu_n(z,{\bf a})=\prod_{k=1}^n (z-a_k), \quad
    \tau_n(z):=\tau_n(z,{\bf a})=\prod_{k=1}^n (z-\overline{a}_k).
\end{equation}
і
 \begin{equation}\label{R_n_Diff}
    R_n(z,\lambda)=R_n(z,\lambda, A,B,{\bf a})=
    \frac {A+Bz}{(z^2+\lambda^2)^2\tau_n(z)},\quad z\in {\mathbb C}^{(+)}.
\end{equation}

Позначимо через $S_n(x,R_n)$ -- частинну суму порядку $n$ ряду Фур'є функції $R_n(z,\lambda)$
 за системою  $\{\Phi_n(z)\}_{n=1}^\infty$, тобто,
  \[
  S_n(x,R_n)=\sum_{k=1}^n c_k(R_n)\Phi_k(x),
  \]
  де $c_k(R_n)=\frac 1\pi\int_{-\infty}^\infty R_n(t,\lambda)\overline{\Phi_k(t)}~{\mathrm d}t$, $k=0,1,\ldots$.

  В прийнятих позначеннях має місце таке твердження.

\begin{lemma} \label{lemma-R-S}
  Нехай $A,B$ -- довільні комплексні числа, а $\lambda>0.$ Тоді для довільних $z\in \mathbb{C}_+$ та   $n\in \mathbb{N}$ виконується тотожність
\[
    R_n(z, \lambda)-S_{n}(z; R_n)=
    \frac{1}{4\lambda^2 \tau_n ({\mathrm i}\lambda )}
    \Bigg[ \frac{{\mathrm i}A }{\lambda({\mathrm i}\lambda -z)} - \frac{A+{\mathrm i}\lambda B }{({\mathrm i}\lambda -z)^2}  + \frac{A+{\mathrm i}\lambda B }{{\mathrm i}\lambda -z}\Sigma_{n,\lambda} \Bigg]
\]
\begin{equation} \label{R-S}
    +
    \frac{1}{4\lambda^2 \nu_n (-{\mathrm i}\lambda )}\frac{\nu_n (z)}{\tau_n(z)}
    \Bigg[ \frac{{\mathrm i}A }{\lambda({\mathrm i}\lambda +z)}
    -\frac{A-{\mathrm i}\lambda B }{({\mathrm i}\lambda +z)^2}-\frac{A-{\mathrm i}\lambda B }{{\mathrm i}\lambda +z}
    \overline{\Sigma}_{n,\lambda} \Bigg],
\end{equation}
де 
\begin{equation} \label{sigma}
    \Sigma_{n,\lambda}:=\Sigma_{n,\lambda}({\bf a})=\sum_{k=1}^n \frac{a_k+{\mathrm i}\lambda}{\alpha_k^2+(\beta_k+\lambda)^2},
\end{equation}
$a_k=\alpha_k+i\beta_k$, $\alpha_k\in {\mathbb R}$, $\beta_k>0$, $k=1,2,\ldots$.

\end{lemma}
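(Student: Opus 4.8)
The plan is to run the very same computation that yields Theorem~\ref{T-represent-f}, but to account for the fact that $R_n$ is \emph{not} in $H_2(\mathbb{C}_+)$: the function $R_n(z)=\frac{A+Bz}{(z^2+\lambda^2)^2\tau_n(z)}$ has a double pole at $z=\mathrm{i}\lambda\in\mathbb{C}_+$, while all its remaining poles ($-\mathrm{i}\lambda$ and the $\overline a_k$) lie in the lower half-plane. First I would take the Dzhrbashyan identity \eqref{Dzhrbashyan-identity} with $\zeta=t\in\mathbb{R}$ (so $\overline\zeta=t$), multiply through by $\tfrac1\pi R_n(t)$ and integrate over $\mathbb{R}$. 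The middle sum reproduces $S_n(z;R_n)=\sum_{k=1}^n c_k(R_n)\Phi_k(z)$ and the last term reproduces $\frac{B_n(z)}{2\pi\mathrm{i}}\int_{-\infty}^{\infty}\frac{R_n(t)\overline{B_n(t)}}{t-z}\,\mathrm dt$ verbatim.

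The only place where the argument must depart from \eqref{Coushy-form} is the left-hand side $\frac1\pi\int_{-\infty}^\infty\frac{R_n(t)}{2\mathrm{i}(t-z)}\,\mathrm dt$. Since $R_n(t)=O(t^{-(n+3)})$ and is meromorphic in $\overline{\mathbb C_+}$ with a single (double) pole at $\mathrm{i}\lambda$, closing the contour in the upper half-plane now picks up the residue at $t=z$ \emph{and} an extra residue at $t=\mathrm{i}\lambda$, so this integral equals $R_n(z)+\operatorname*{Res}_{t=\mathrm{i}\lambda}\frac{R_n(t)}{t-z}$ rather than just $R_n(z)$. Rearranging gives the master identity
\begin{equation*}
R_n(z)-S_n(z;R_n)=-\operatorname*{Res}_{t=\mathrm{i}\lambda}\frac{R_n(t)}{t-z}+\frac{B_n(z)}{2\pi\mathrm{i}}\int_{-\infty}^{\infty}\frac{R_n(t)\overline{B_n(t)}}{t-z}\,\mathrm dt ,
\end{equation*}
which I expect to split exactly into the two brackets of \eqref{R-S}.

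For the first bracket I would evaluate the residue at the double pole directly: writing $u(t)=\frac{A+Bt}{(t+\mathrm{i}\lambda)^2\tau_n(t)(t-z)}$ one has $\operatorname*{Res}_{t=\mathrm{i}\lambda}\frac{R_n(t)}{t-z}=u'(\mathrm{i}\lambda)=u(\mathrm{i}\lambda)\big(\tfrac{B}{A+\mathrm{i}\lambda B}-\tfrac{1}{\mathrm{i}\lambda}-\tfrac{\tau_n'(\mathrm{i}\lambda)}{\tau_n(\mathrm{i}\lambda)}-\tfrac{1}{\mathrm{i}\lambda-z}\big)$. The crucial simplification is the identity $\frac{a_k+\mathrm{i}\lambda}{\alpha_k^2+(\beta_k+\lambda)^2}=\frac{1}{\overline a_k-\mathrm{i}\lambda}$, which gives $\Sigma_{n,\lambda}=-\tau_n'(\mathrm{i}\lambda)/\tau_n(\mathrm{i}\lambda)$. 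Since $u(\mathrm{i}\lambda)=\frac{A+\mathrm{i}\lambda B}{-4\lambda^2\tau_n(\mathrm{i}\lambda)(\mathrm{i}\lambda-z)}$, the $\tfrac{B}{A+\mathrm{i}\lambda B}$ and $-\tfrac1{\mathrm{i}\lambda}$ contributions combine (using $B-\frac{A+\mathrm{i}\lambda B}{\mathrm{i}\lambda}=\frac{\mathrm{i}A}{\lambda}$) into the single term $\frac{\mathrm{i}A}{\lambda(\mathrm{i}\lambda-z)}$, and one recovers \emph{exactly} the first bracket with prefactor $\frac1{4\lambda^2\tau_n(\mathrm{i}\lambda)}$.

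The second bracket is the real work. On $\mathbb R$ one replaces $\overline{B_n(t)}$ by its rational continuation $B_n^\ast$ (conjugate the parameters), which satisfies $B_n(z)B_n^\ast(z)\equiv1$; then the integrand $\frac{R_n(t)B_n^\ast(t)}{t-z}$ has, in the lower half-plane, only a double pole at $-\mathrm{i}\lambda$ and a simple pole at $\overline a_n$, because the zeros of $B_n^\ast$ at $\overline a_1,\dots,\overline a_{n-1}$ cancel the corresponding factors of $\tau_n$. Closing in the lower half-plane (where $B_n^\ast$ is analytic and the pole $t=z$ is absent) reduces the integral to these two residues: the double pole at $-\mathrm{i}\lambda$ produces the $(\mathrm{i}\lambda+z)^{-2}$ and $(\mathrm{i}\lambda+z)^{-1}$ terms together with a \emph{partial} logarithmic derivative $\nu_{n-1}'(-\mathrm{i}\lambda)/\nu_{n-1}(-\mathrm{i}\lambda)$. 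The main obstacle is precisely the bookkeeping here: one must show that multiplying by $B_n(z)$ (which carries $\nu_{n-1}(z)/\tau_{n-1}(z)$ and the constants $\chi_k$, with $\chi_k\overline{\chi_k}=1$) and folding in the separate residue at $\overline a_n$ upgrades $\nu_{n-1}$ to $\nu_n$, completes $\nu_{n-1}(z)/\tau_{n-1}(z)$ to $\nu_n(z)/\tau_n(z)$, and extends the partial sum to the full $\overline\Sigma_{n,\lambda}=-\nu_n'(-\mathrm{i}\lambda)/\nu_n(-\mathrm{i}\lambda)=\sum_{k=1}^n\frac1{a_k+\mathrm{i}\lambda}$, giving the second bracket with prefactor $\frac1{4\lambda^2\nu_n(-\mathrm{i}\lambda)}\cdot\frac{\nu_n(z)}{\tau_n(z)}$. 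I expect this reassembly to be the only genuinely delicate step; everything else is a direct residue evaluation.
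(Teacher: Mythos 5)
Your overall strategy coincides with the paper's: evaluate $\mathcal{J}_n(z,\lambda)=\frac{1}{2\pi\mathrm{i}}\int_{-\infty}^{\infty}\frac{R_n(t,\lambda)}{t-z}\,\mathrm{d}t$ in two ways --- once by closing the contour in $\mathbb{C}_+$, picking up the simple pole at $t=z$ and the double pole at $t=\mathrm{i}\lambda$, and once through the identity \eqref{Dzhrbashyan-identity} with $\zeta=t\in\mathbb{R}$ --- and then compute the remainder integral by closing in $\mathbb{C}_-$. Your master identity $R_n-S_n=-\operatorname{res}_{t=\mathrm{i}\lambda}\frac{R_n(t)}{t-z}+B_n(z)U_n(z,\lambda)$ is exactly the paper's \eqref{I_1_1} combined with \eqref{I_1_12}, and your evaluation of the residue at the double pole via the logarithmic derivative, including the key observation $\Sigma_{n,\lambda}=\sum_{k}(\overline{a}_k-\mathrm{i}\lambda)^{-1}=-\tau_n'(\mathrm{i}\lambda)/\tau_n(\mathrm{i}\lambda)$, reproduces the first bracket of \eqref{R-S} correctly.

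The one point where your sketch goes astray is precisely the step you flag as ``genuinely delicate,'' and the difficulty you anticipate there is an artifact of a misreading rather than a real obstacle. You take $B_n$ to be the $(n-1)$-factor product of \eqref{Phi-system}, which leaves an uncancelled simple pole at $\overline{a}_n$ in the lower half-plane and forces the unproved ``upgrade'' of $\nu_{n-1}$ to $\nu_n$ and of the partial sum to the full $\overline{\Sigma}_{n,\lambda}$. But with that reading the identity \eqref{Dzhrbashyan-identity} is itself false (test $n=1$: it would force $\overline{\Phi_1(\zeta)}\Phi_1(z)\equiv 0$); the $B_n$ in the remainder term must be the full $n$-factor Blaschke product $\prod_{k=1}^{n}\chi_k\frac{z-a_k}{z-\overline{a}_k}$, which is what the paper uses in its proof. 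Then for $t\in\mathbb{R}$ one has $\overline{B_n(t)}=\prod_{k=1}^{n}\overline{\chi}_k\cdot\tau_n(t)/\nu_n(t)$, the remainder integrand reduces to $\prod_{k=1}^{n}\overline{\chi}_k\cdot\frac{A+Bt}{(t^2+\lambda^2)^2\nu_n(t)(t-z)}$ whose only singularity in $\mathbb{C}_-$ is the double pole at $-\mathrm{i}\lambda$, and the prefactor $\frac{\nu_n(z)}{\tau_n(z)}$ falls out immediately from $B_n(z)\prod_{k=1}^{n}\overline{\chi}_k$ since $|\chi_k|=1$. With this correction there is no reassembly to do: the residue at $-\mathrm{i}\lambda$ is computed exactly as at $+\mathrm{i}\lambda$, using $\overline{\Sigma}_{n,\lambda}=\sum_k(a_k+\mathrm{i}\lambda)^{-1}$, and your argument becomes the paper's proof.
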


\begin{proof}

Нехай $z\in \mathbb{C}_+$. Розглянемо наступний інтеграл типу Коші
\begin{equation} \label{Koshi_I}
 {\cal J}_n(z,\lambda):=\frac 1{2\pi {\rm i}}\int_{-\infty}^\infty \frac{R_n(t,\lambda)}{t-z}{\mathrm d}t=\frac 1{2\pi {\rm i}}\int_{-\infty}^\infty \frac{A+Bt}{(t^2+\lambda^2)^2\tau_n(t)}
 \frac {{\mathrm d}t}{t-z}.
\end{equation}
Підінтегральна функція, як функція комплексної змінної $t$, має в області $\mathbb{C}_+$  простий полюс в точці $t_1=z$ та полюс кратності 2 в точці $t_2={\mathrm i}\lambda$, а при $|t|\to +\infty$ -- порядок ${\mathcal O}(|t|^{-n-5})$. Тому на основі теореми Коші про лишки
\begin{equation} \label{I_1}
 {\cal J}_n(z,\lambda)=
 \sum_{k=1}^2\mathop{\rm res}\limits_{t=t_k}
 \frac{A+Bt}{(t^2+\lambda^2)^2(t-z)\tau_n(t)}.
\end{equation}
Маємо
\begin{equation} \label{res_1}
  \mathop{\rm res}\limits_{t=t_1}
 \frac{A+Bt}{(t^2+\lambda^2)^2(t-z)\tau_n(t)}
  = \frac{A+Bz}{(z^2+\lambda^2)^2\tau_n(z)}= R_n(z, \lambda),
\end{equation}
а
\begin{equation} \label{res_2}
    \mathop{\rm res}\limits_{t=t_2}
 \frac{A+Bt}{(t^2+\lambda^2)^2(t-z)\tau_n(t)}
 =\lim\limits_{t\to {\mathrm i}\lambda}\bigg[\frac{A+Bt}{(t+{\mathrm i}\lambda)^2(t-z)\tau_n(t)} \bigg]'_t,
\end{equation}
де
\begin{equation} \label{derivative}
\bigg[\frac{A+Bt}{(t+{\mathrm i}\lambda)^2(t-z)\tau_n(t)}  \bigg]'_t=
 \frac{B}{(t+{\mathrm i}\lambda)^2(t-z)\tau_n(t)} -\frac{2(A+Bt)}{(t+{\mathrm i}\lambda)^3(t-z)\tau_n(t)}
 $$
 $$
 -\frac{A+Bt}{(t+{\mathrm i}\lambda)^2(t-z)^2\tau_n(t)}  -\frac{A+Bt}{(t+{\mathrm i}\lambda)^2(t-z) \tau_n(t)} \sum_{k=1}^n\frac 1{t-\overline{a}_k}.
\end{equation}
Звідси
\[
  \mathop{\rm res}\limits_{t=t_2}
 \frac{A+Bt}{( t^2+\lambda^2)^2(t-z)\tau_n(t)}
 = -\frac{2A}{(2{\mathrm i}\lambda)^3({\mathrm i}\lambda-z)\tau_n({\mathrm i}\lambda)}  -\frac{A+B{\mathrm i}\lambda}{(2{\mathrm i}\lambda)^2({\mathrm i}\lambda-z)^2\tau_n({\mathrm i}\lambda)}
\]
\begin{equation} \label{res_2_1}
 -\frac{A+B{\mathrm i}\lambda}{(2{\mathrm i}\lambda)^2({\mathrm i}\lambda-z) \tau_n({\mathrm i}\lambda)} \sum_{k=1}^n\frac 1{{\mathrm i}\lambda-\overline{a}_k}.
\end{equation}
Якщо $a_k=\alpha_k+\mathrm{i}\beta_k,$ то
 \[
 \frac 1{{\mathrm i}\lambda-\overline{a}_k}
 =-\frac 1{\alpha_k-{\mathrm i}(\lambda+\beta_k)}
  =-\frac {\alpha_k+{\mathrm i}\beta_k+{\mathrm i}\lambda}{\alpha_k^2+(\lambda+\beta_k)^2}
 =-\frac {a_k+{\mathrm i}\lambda}{\alpha_k^2+(\lambda+\beta_k)^2}.
 \]
 Тому, враховуючи позначення (\ref{sigma}), отримуємо
\[
  \mathop{\rm res}\limits_{t=t_2}
 \frac{A+Bt}{( t^2+\lambda^2)^2(t-z)\tau_n(t)}= - \frac{1}{4\lambda^2 \tau_n ({\mathrm i}\lambda )}
    \Bigg[ \frac{{\mathrm i}A }{\lambda({\mathrm i}\lambda -z)}
\]
\begin{equation} \label{res_2_2}
   - \frac{A+{\mathrm i}\lambda B }{({\mathrm i}\lambda -z)^2}  + \frac{A+{\mathrm i}\lambda B }{{\mathrm i}\lambda -z}\Sigma_{n,\lambda} \Bigg].
\end{equation}

Підставивши співвідношення  (\ref{res_1}) та (\ref{res_2_2}) в (\ref{I_1}), отримуємо
 \begin{equation} \label{I_1_1}
 {\cal J}_n(z,\lambda)=  R_n(z, \lambda) - \frac{1}{4\lambda^2 \tau_n ({\mathrm i}\lambda )}
    \Bigg[ \frac{{\mathrm i}A }{\lambda({\mathrm i}\lambda -z)}
 $$
 $$
  - \frac{A+{\mathrm i}\lambda B }{({\mathrm i}\lambda -z)^2}  + \frac{A+{\mathrm i}\lambda B }{{\mathrm i}\lambda -z}\Sigma_{n,\lambda} \Bigg].
\end{equation}
З іншого боку, якщо в тотожності (\ref{Dzhrbashyan-identity}) покласти $\zeta=t\in {\mathbb R}$, то використовуючи зображення
(\ref{Koshi_I}), отримаємо
 \[
  {\cal J}_n(z,\lambda)=\sum_{k=1}^{n}\frac {\Phi_k(z)}{\pi}\int_{-\infty}^\infty R_n(t,\lambda)\overline{\Phi_k(t)}{\mathrm d}t+
    \frac{B_n(z)}{2\pi {\mathrm i}}\int_{-\infty}^\infty\frac{\overline{B_n(t)}R_n(t,\lambda)}{t-z}{\mathrm d}t
 \]
 \begin{equation} \label{I_1_12}
  =S_n(z,R_n)+B_n(z)U_n(z,\lambda),
\end{equation}
де
 \begin{equation} \label{I_1_12}
   U_n(z,\lambda)=\frac{1}{2\pi {\mathrm i}}\int_{-\infty}^\infty\frac{\overline{B_n(t)}R_n(t,\lambda)}{t-z}{\mathrm d}t
   $$
   $$
   =
   \frac {\prod\limits_{k=1}^n\overline{\chi}_k}{2\pi {\rm i}}\int_{-\infty}^\infty
   \overline{\Bigg(\frac {\nu_n(t)}{\tau_n(t)}\Bigg)}\frac{A+Bt}{(t^2+\lambda^2)^2\tau_n(t)}
 \frac {{\mathrm d}t}{t-z}.
\end{equation}
Оскільки $t\in {\mathbb R}$, то
 \[
 \overline{\Bigg(\frac {\nu_n(t)}{\tau_n(t)}\Bigg)}=\frac {\prod\limits_{k=1}^n\overline{(t-a_k)}}
 {\prod\limits_{k=1}^n\overline{(t-\overline{a}_k)}}=\frac {\prod\limits_{k=1}^n(t-\overline{a}_k)}
 {\prod\limits_{k=1}^n (t- a_k)}=\frac {\tau_n(t)}{\nu_n(t)}
 \]
і тому
 \begin{equation} \label{I_1_123}
   U_n(z,\lambda) =
   \frac {\prod\limits_{k=1}^n\overline{\chi}_k}{2\pi {\rm i}}\int_{-\infty}^\infty
   \frac{A+Bt}{(t^2+\lambda^2)^2\ \nu_n(t)}
 \frac {{\mathrm d}t}{t-z}.
\end{equation}
Підінтегральна функція, як функція комплексної змінної $t$ має в області $\mathbb{C}_-$    полюс   кратності 2 в точці $t=-{\mathrm i}\lambda$, а при $|t|\to +\infty$ -- порядок ${\mathcal O}(|t|^{-n-5})$. Тому на основі теореми Коші про лишки
\begin{equation} \label{I_1}
 U_n(z,\lambda) =-\prod\limits_{k=1}^n\overline{\chi}_k\mathop{\rm res}\limits_{t=-{\mathrm i}\lambda}
 \frac{A+Bt}{(t^2+\lambda^2)^2(t-z)\nu_n(t)}.
\end{equation}
Аналогічно до (\ref{res_2_1})
\begin{equation} \label{res_3}
  \mathop{\rm res}\limits_{t=-{\mathrm i}\lambda} \frac{A+Bt}{(t^2+\lambda^2)^2(t-z)\nu_n(t)}
  =
 \frac{A}{4{\mathrm i}\lambda^3({\mathrm i}\lambda+z) \nu_n(-{\mathrm i}\lambda)}
 $$ $$
 +
 \frac{A-{\mathrm i}\lambda B}{4\lambda^2({\mathrm i}\lambda+z)^2 \nu_n(-{\mathrm i}\lambda)}  +\frac{A-{\mathrm i}\lambda B}{4\lambda^2({\mathrm i}\lambda+z)  \nu_n(-{\mathrm i}\lambda)}
 \sum_{k=1}^n\frac 1{{\mathrm i}\lambda+ {a}_k}.
\end{equation}
Якщо $a_k=\alpha_k+i\beta_k,$ то
 \[
 \frac 1{{\mathrm i}\lambda+ {a}_k} 
 =\frac 1{\alpha_k+{\mathrm i}(\lambda+\beta_k)}
  =\frac {\alpha_k-{\mathrm i}\beta_k-{\mathrm i}\lambda}{\alpha_k^2+(\lambda+\beta_k)^2}
 = \frac {\overline{a}_k-{\mathrm i}\lambda}{\alpha_k^2+(\lambda+\beta_k)^2}.
 \]
 Отже, з врахуванням (\ref{sigma}) маємо
 \begin{equation} \label{U_n_eq1}
  U_n(z,\lambda) = \frac{\prod\limits_{k=1}^n\overline{\chi}_k}{4\lambda^2 \nu_n (-{\mathrm i}\lambda )}
    \Bigg[ \frac{{\mathrm i}A }{\lambda({\mathrm i}\lambda +z)}
    -\frac{A-{\mathrm i}\lambda B }{({\mathrm i}\lambda +z)^2}-\frac{A-{\mathrm i}\lambda B }{{\mathrm i}\lambda +z}
    \overline{\Sigma}_{n,\lambda} \Bigg].
\end{equation}

З рівностей (\ref{I_1_12})  та (\ref{U_n_eq1}) отримуємо
\[
  {\cal J}_n(z,\lambda)=S_n(z,R_n)+\frac{\nu_n(z)}{\tau_n(z)}\cdot
  \frac{1}{4\lambda^2 \nu_n (-{\mathrm i}\lambda )}
    \Bigg[ \frac{{\mathrm i}A }{\lambda({\mathrm i}\lambda +z)}
    \]
\begin{equation} \label{U_n_eq}
  -\frac{A-{\mathrm i}\lambda B }{({\mathrm i}\lambda +z)^2}-\frac{A-{\mathrm i}\lambda B }{{\mathrm i}\lambda +z}
    \overline{\Sigma}_{n,\lambda} \Bigg] .
\end{equation}
Співставляючи рівності  (\ref{I_1_1}) та (\ref{U_n_eq}), отримуємо (\ref{R-S}).

\end{proof}

Для фіксованого значення $\lambda (\lambda>0)$ визначимо  поліном $T_{n-1}(z; \lambda)$
порядку $n-1$ від  $z$ за допомогою рівності
\begin{equation}\label{T}
    T_{n-1}(z; \lambda):=\tau_n (z) S_n(z; R_n), \quad n=1,2,\ldots~.
\end{equation}

Разом з $T_{n-1}(z; \lambda)$ розглянемо поліноми
\begin{equation}\label{L}
L_{n-1}(z;w):=\begin{cases}
                    \frac{\nu_n(w)-\nu_n(z)}{(z-w) \nu_n(w)}, \quad
                    w \in \mathbb{C}_-,  \\
                     \frac{\tau_n(w)-\tau_n(z)}{(w-z) \tau_n(w)}, \quad
                     w \in \mathbb{C}_+;
\end{cases}
\end{equation}

\begin{equation}\label{M}
M_{n-2}(z;w):=\begin{cases} \frac{\nu_n(w)-\nu_n(z)}{(w-z)^2 \nu_n(w)}, \quad
                    w \in \mathbb{C}_-,  \\
                     \frac{\tau_n(w)-\tau_n(z)}{(w-z)^2 \tau_n(w)}, \quad
                     w \in \mathbb{C}_+;
\end{cases}
\end{equation}

Справджується наступне твердження.

\begin{lemma} \label{lemma-r-T}
  Нехай $A,B$ --  комплексні константи, а $\lambda>0.$ Тоді якщо $z\in \mathbb{C}_+,$ то для будь-якого   $n\in \mathbb{N}$ виконується тотожність
\[
    \frac{A+Bz}{(z^2+\lambda^2)^2}-T_{n-1}(z; \lambda)=
    \frac{1}{4\lambda^2}\frac{\tau_n (z)}{\tau_n (\mathrm{i}\lambda)}
    \left[ \frac{A \mathrm{i}}{\lambda(\mathrm{i}\lambda-z)} - \frac{A+\mathrm{i}\lambda B }{(\mathrm{i}\lambda-z)^2}+\right.
     \left. \frac{A+\mathrm{i}\lambda B }{ \mathrm{i}\lambda-z }
     {\Sigma}_{n,\lambda} \right]
\]
\begin{equation} \label{r-T}
     +
    \frac{1}{4\lambda^2}\frac{\nu_n (z)}{\nu_n (-\mathrm{i}\lambda)}
    \left[ \frac{A \mathrm{i}}{\lambda(\mathrm{i}\lambda+z)}-\right.
    \left.-\frac{A-\mathrm{i}\lambda B }{(\mathrm{i}\lambda+z)^2}-
    \frac{A-\mathrm{i}\lambda B }{ \mathrm{i}\lambda+z }
     \overline{\Sigma}_{n,\lambda}\right],
\end{equation}
де ${\Sigma}_{n,\lambda}$ визначається співвідношенням (\ref{sigma}). При цьому поліном
$T_{n-1}(z; \lambda)$ має зображення
\[
    T_{n-1}(z; \lambda)=\frac{1}{4\lambda^2}
    \Bigg[\Big(\frac {A{\mathrm i}}{\lambda}+ (A+\mathrm{i}\lambda B){\Sigma}_{n,\lambda} \Big)L_{n-1}(z,\mathrm{i}\lambda)
\]
\[
    +
    \Big(\frac {A{\mathrm i}}{\lambda}-(A-\mathrm{i}\lambda B)\overline{\Sigma}_{n,\lambda} \Big)L_{n-1}(z,-\mathrm{i}\lambda)
\]
\begin{equation} \label{T-L-M-N}
   -(A+\mathrm{i}\lambda B)M_{n-2}(z,\mathrm{i}\lambda)-
    (A-\mathrm{i}\lambda B) M_{n-2}(z,-\mathrm{i}\lambda)\Bigg].
\end{equation}

\end{lemma}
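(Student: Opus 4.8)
The first identity (\ref{r-T}) needs no work beyond Lemma~\ref{lemma-R-S}. By the definition (\ref{R_n_Diff}) we have $\tau_n(z)R_n(z,\lambda)=\frac{A+Bz}{(z^2+\lambda^2)^2}$, and by (\ref{T}) we have $\tau_n(z)S_n(z;R_n)=T_{n-1}(z;\lambda)$; hence I would simply multiply both sides of (\ref{R-S}) by $\tau_n(z)$. The prefactor $\frac{1}{4\lambda^2\tau_n(\mathrm i\lambda)}$ of the first bracket becomes $\frac{1}{4\lambda^2}\frac{\tau_n(z)}{\tau_n(\mathrm i\lambda)}$, while the prefactor $\frac{1}{4\lambda^2\nu_n(-\mathrm i\lambda)}\frac{\nu_n(z)}{\tau_n(z)}$ of the second bracket loses the $\tau_n(z)$ in its denominator and becomes $\frac{1}{4\lambda^2}\frac{\nu_n(z)}{\nu_n(-\mathrm i\lambda)}$. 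The two bracketed factors are copied unchanged, and this is exactly (\ref{r-T}).

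For the closed form (\ref{T-L-M-N}) I would solve (\ref{r-T}) for $T_{n-1}$, writing $T_{n-1}(z;\lambda)$ as $\frac{A+Bz}{(z^2+\lambda^2)^2}$ minus the right-hand side of (\ref{r-T}), and then split the scalar kernel $\frac{A+Bz}{(z^2+\lambda^2)^2}=\frac{A+Bz}{(z-\mathrm i\lambda)^2(z+\mathrm i\lambda)^2}$ into its principal parts at the two double poles $z=\pm\mathrm i\lambda$. A residue computation of the same kind as in Lemma~\ref{lemma-R-S} supplies the coefficients $-\frac{A+\mathrm i\lambda B}{4\lambda^2}$ and $-\frac{\mathrm i A}{4\lambda^3}$ at $z=\mathrm i\lambda$, together with their conjugate-type counterparts at $z=-\mathrm i\lambda$. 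I would then gather the resulting terms with the two brackets of (\ref{r-T}) pole by pole, keeping the $z=\mathrm i\lambda$ contributions (those carrying $\tau_n$) separate from the $z=-\mathrm i\lambda$ contributions (those carrying $\nu_n$).

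The content of the lemma is that each group collapses into the blocks (\ref{L}), (\ref{M}). Since $\mathrm i\lambda\in\mathbb C_+$ and $-\mathrm i\lambda\in\mathbb C_-$, the definitions read $L_{n-1}(z;\mathrm i\lambda)=\frac{\tau_n(z)-\tau_n(\mathrm i\lambda)}{(z-\mathrm i\lambda)\tau_n(\mathrm i\lambda)}$ and $M_{n-2}(z;\mathrm i\lambda)=-\frac{\tau_n(z)-\tau_n(\mathrm i\lambda)}{(z-\mathrm i\lambda)^2\tau_n(\mathrm i\lambda)}$, and symmetrically with $\nu_n$ at $-\mathrm i\lambda$. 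Pairing the term $\frac{\tau_n(z)}{\tau_n(\mathrm i\lambda)(z-\mathrm i\lambda)^2}$ against the bare $\frac{1}{(z-\mathrm i\lambda)^2}$ from the partial fraction produces $-M_{n-2}(z;\mathrm i\lambda)$, and the analogous simple-pole pairing produces $L_{n-1}(z;\mathrm i\lambda)$. The one step I expect to require real care is the residual simple pole at $z=\mathrm i\lambda$: it is governed by the identity $\Sigma_{n,\lambda}=-\tau_n'(\mathrm i\lambda)/\tau_n(\mathrm i\lambda)$, which is immediate from (\ref{sigma}) and the evaluation of $\frac{1}{\mathrm i\lambda-\overline a_k}$ already recorded before (\ref{res_2_2}), since $\tau_n'/\tau_n=\sum_k (z-\overline a_k)^{-1}$. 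This identity is precisely what matches the $\Sigma_{n,\lambda}$ coefficient against the logarithmic derivative of $\tau_n$ at $\mathrm i\lambda$ (and $\overline\Sigma_{n,\lambda}$ against that of $\nu_n$ at $-\mathrm i\lambda$), so that the leftover $\frac{1}{z\mp\mathrm i\lambda}$ contributions cancel the simple poles of $M_{n-2}$ and the right-hand side reduces to the genuine degree-$(n-1)$ polynomial $T_{n-1}$. Everything else is the routine algebra of collecting the two principal parts.
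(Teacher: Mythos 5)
Your proof of (\ref{r-T}) is exactly the paper's: multiply (\ref{R-S}) by $\tau_n(z)$ and use (\ref{R_n_Diff}) and (\ref{T}). For (\ref{T-L-M-N}) the paper likewise only invokes the partial-fraction decomposition of $\frac{A+Bz}{(z^2+\lambda^2)^2}$ at the two double poles and leaves the regrouping to the reader, so your route is the same, and your coefficients $-\frac{A+\mathrm i\lambda B}{4\lambda^2}$ and $-\frac{\mathrm iA}{4\lambda^3}$ at $z=\mathrm i\lambda$ agree with the expansion the paper records.

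The one step you flagged as delicate is indeed where care is needed, and your resolution is stated too loosely. Carrying out the regrouping, the $\Sigma_{n,\lambda}$-term of the first bracket contributes $(A+\mathrm i\lambda B)\Sigma_{n,\lambda}L_{n-1}(z;\mathrm i\lambda)-\frac{(A+\mathrm i\lambda B)\Sigma_{n,\lambda}}{\mathrm i\lambda-z}$, and analogously at $-\mathrm i\lambda$. These leftover simple-pole terms do not vanish. What is true, via your identity $\Sigma_{n,\lambda}=-\tau_n'(\mathrm i\lambda)/\tau_n(\mathrm i\lambda)$ (and $\overline\Sigma_{n,\lambda}=-\nu_n'(-\mathrm i\lambda)/\nu_n(-\mathrm i\lambda)$), is that they are precisely the principal parts of $-(A+\mathrm i\lambda B)M_{n-2}(z;\mathrm i\lambda)$ and $-(A-\mathrm i\lambda B)M_{n-2}(z;-\mathrm i\lambda)$ with $M_{n-2}$ as defined in (\ref{M}), so the total is a genuine polynomial of degree $n-1$. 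Consequently, either the two leftover terms must appear explicitly on the right of (\ref{T-L-M-N}), or $M_{n-2}$ must be read as the second divided difference $\frac{\tau_n(w)-\tau_n(z)+\tau_n'(w)(z-w)}{(w-z)^2\tau_n(w)}$ (which is what the subscript $n-2$ suggests is intended, since (\ref{M}) as printed is not a polynomial). Saying that the leftovers cancel the poles of $M_{n-2}$ and then dropping them while keeping $M_{n-2}$ from (\ref{M}) is not literally consistent; apart from this point, which is really a defect of the printed formula rather than of your argument, your derivation is sound and more explicit than the paper's.
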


\begin{proof}
 Дійсно, виразивши значення $S_n(z; R_n)$ з рівності (\ref{T}) і підставивши його в тотожність (\ref{R-S}) леми \ref{lemma-R-S}, отримаємо співвідношення (\ref{r-T}). Зображення (\ref{T-L-M-N}) полінома $T_{n-1}(z, \lambda)$ випливає з розкладу
\[
    \frac{A+Bz}{(z^2+\lambda^2)^2}=\frac{A\mathrm{i}}{4\lambda^3 ({\mathrm i}\lambda-z)}+
    \frac{A\mathrm{i}}{4\lambda^3 (\mathrm{i}\lambda+z)} -
    \frac{A+\mathrm{i}\lambda B}{4\lambda^2 (\mathrm{i}\lambda-z)^2}-
    \frac{A-\mathrm{i}\lambda B}{4\lambda^2 (\mathrm{i}\lambda+z)^2}.
\]

\end{proof}

Позначимо через $P_{n-1}$ множину довільних поліномів з комплексними коефіцієнтами порядку, не вищого за $n-1.$ Будемо вважати у подальшому, що
\begin{equation}\label{rho}
  \rho_n(z)=\rho_0 \nu_n(z)=\rho_0 \prod_{k=1}^{n}(z-a_k),
\end{equation}
де $\rho_0\not=0$ -- довільна стала.

Нехай тепер $A,B, \lambda$ -- фіксовані дійсні числа, $\lambda>0$. Введемо до розгляду функціонал
\begin{equation}\label{An}
    {\mathcal F}_n (p, \lambda):= \int_{-\infty}^{\infty} \Bigg| \frac{A+Bt}{(t^2+\lambda^2)^2}-
    p(t) \Bigg|^2 \frac{\mathrm{d}t}{|\rho_n(t)|^2}, \quad p \in P_{n-1}.
\end{equation}

Виконується наступне твердження.

\begin{theorem}\label{Teo-inf-An}
  На множині $P_{n-1}$ мінімум функціонала ${\mathcal F}_n(p,\lambda)$ реалізує поліном $T_{n-1}(z,\lambda)$ з леми \ref{lemma-r-T} і при цьому справджується рівність
\begin{equation}\label{Functional}
    \inf\{{\mathcal F}_n (p, \lambda): ~ p\in P_{n-1}\}
     =\frac{2 \pi}{\lambda \mu_n}
    \Bigg[ \Big[\frac{A}{\lambda}+ \lambda B{\mathcal A}_{n,\lambda}\Big]^2+ \frac{3A^2+\lambda^2B^2}{2\lambda^2}
   $$
   $$
   +  \frac{3A^2+\lambda^2 B^2}{ \lambda}  {\mathcal B}_{n,\lambda}
   + A^2 {\mathcal A}_{n,\lambda}^2
   + (A^2+\lambda^2B^2){\mathcal B}_{n,\lambda}^2\Bigg],
\end{equation}
в якій $    \mu_n=\prod\limits_{k=1}^{n} \Big[\alpha_k^2+(\beta_k+\lambda)^2 \Big],$ $a_k=\alpha_k+i\beta_k$, $\beta_k>0$,
 \begin{equation}\label{An_Bn}  {\mathcal A}_{n,\lambda}:=
  \sum_{k=1}^n  \frac{\alpha_k}{\alpha_k^2+(\beta_k+\lambda)^2},\quad {\mathcal B}_{n,\lambda}:=
  \sum_{k=1}^n  \frac{\beta_k+\lambda}{\alpha_k^2+(\beta_k+\lambda)^2}.\end{equation}
\end{theorem}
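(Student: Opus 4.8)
The plan is to recognise ${\mathcal F}_n(p,\lambda)$ as a weighted $L_2(\mathbb R)$–distance and to strip off the weight. For real $t$ one has $\overline{\nu_n(t)}=\tau_n(t)$, so $|\rho_n(t)|^2=|\rho_0|^2|\nu_n(t)|^2=|\rho_0|^2|\tau_n(t)|^2$; dividing the integrand by $\tau_n(t)$ and recognising $R_n(t,\lambda)$ from (\ref{R_n_Diff}) gives
\[
{\mathcal F}_n(p,\lambda)=\frac{1}{|\rho_0|^2}\int_{-\infty}^\infty\Bigl|R_n(t,\lambda)-\frac{p(t)}{\tau_n(t)}\Bigr|^2\,\mathrm{d}t=\frac{\pi}{|\rho_0|^2}\Bigl\|R_n(\cdot,\lambda)-\frac{p}{\tau_n}\Bigr\|_2^2 .
\]
First I would check that, as $p$ ranges over $P_{n-1}$, the functions $p/\tau_n$ range exactly over $V:=\mathrm{span}\{\Phi_1,\dots,\Phi_n\}$: each $\Phi_k$ has common denominator $\tau_n$ and numerator of degree $\le n-1$, and $p\mapsto p/\tau_n$ is a linear bijection of the $n$-dimensional space $P_{n-1}$ onto the $n$-dimensional space $V$.

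With this identification the problem becomes best $L_2(\mathbb R)$ approximation of $R_n(\cdot,\lambda)$ — which lies in $L_2(\mathbb R)$, having no real poles and decaying like $|t|^{-n-3}$ — by elements of the finite-dimensional subspace $V$. By the projection theorem the unique minimiser is the orthogonal projection of $R_n$ onto $V$, namely $\sum_{k=1}^n c_k(R_n)\Phi_k=S_n(\cdot,R_n)$, since $c_k(R_n)=\tfrac1\pi\int R_n\overline{\Phi_k}\,\mathrm{d}t$ is precisely the Fourier coefficient relative to the orthonormal system $\{\Phi_k\}$. The optimal polynomial is therefore $p=\tau_n S_n(\cdot,R_n)=T_{n-1}(\cdot,\lambda)$ by (\ref{T}); this proves the first assertion, and the minimal value equals $\tfrac{\pi}{|\rho_0|^2}\|R_n-S_n\|_2^2$.

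To evaluate this norm I would use the explicit formula (\ref{R-S}) of Lemma \ref{lemma-R-S}, writing $R_n-S_n=F_1+F_2$, where $F_1$ is the first bracket (carrying $1/(4\lambda^2\tau_n(\mathrm{i}\lambda))$) and $F_2$ the second (carrying $\nu_n(z)/\tau_n(z)$ and $1/(4\lambda^2\nu_n(-\mathrm{i}\lambda))$). The only pole of $F_1$ is at $\mathrm{i}\lambda\in\mathbb C_+$, whereas $F_2$ is analytic in $\mathbb C_+$, so $F_1$ and $F_2$ sit in the two summands of the classical orthogonal decomposition $L_2(\mathbb R)=H_2(\mathbb C_-)\oplus H_2(\mathbb C_+)$; hence $\|R_n-S_n\|_2^2=\|F_1\|_2^2+\|F_2\|_2^2$. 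A short computation gives $F_2(t)=\tfrac{\nu_n(t)}{\tau_n(t)}\overline{F_1(t)}$ on $\mathbb R$, and since $|\nu_n(t)/\tau_n(t)|=1$ there, $\|F_2\|_2=\|F_1\|_2$, so that $\|R_n-S_n\|_2^2=2\|F_1\|_2^2$.

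Finally I would compute $\|F_1\|_2^2$. Writing the bracket as $\tfrac{D}{\mathrm{i}\lambda-t}-\tfrac{Q}{(\mathrm{i}\lambda-t)^2}$ with $Q=A+\mathrm{i}\lambda B$ and $D=\tfrac{\mathrm{i}A}{\lambda}+Q\,\Sigma_{n,\lambda}$, the norm reduces to the elementary integrals $\tfrac1\pi\int|\mathrm{i}\lambda-t|^{-2}\,\mathrm{d}t=\tfrac1\lambda$, $\tfrac1\pi\int|\mathrm{i}\lambda-t|^{-4}\,\mathrm{d}t=\tfrac1{2\lambda^3}$ and the residue integral $\tfrac1\pi\int\tfrac{\mathrm{d}t}{(\mathrm{i}\lambda-t)(-\mathrm{i}\lambda-t)^2}=\tfrac{\mathrm{i}}{2\lambda^2}$, giving $\tfrac1\pi\int|\cdots|^2\,\mathrm{d}t=\tfrac{|D|^2}{\lambda}+\tfrac{|Q|^2}{2\lambda^3}+\tfrac{\operatorname{Im}(D\overline Q)}{\lambda^2}$. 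Substituting $\Sigma_{n,\lambda}={\mathcal A}_{n,\lambda}+\mathrm{i}{\mathcal B}_{n,\lambda}$ (from (\ref{sigma}), (\ref{An_Bn})) and $|4\lambda^2\tau_n(\mathrm{i}\lambda)|^{-2}=(16\lambda^4\mu_n)^{-1}$ (note $|\tau_n(\mathrm{i}\lambda)|^2=\mu_n$), I would expand $|D|^2$, $|Q|^2$ and $\operatorname{Im}(D\overline Q)$ and collect terms. The hard part is precisely this bookkeeping: one must verify that the cross terms $\pm2\lambda AB{\mathcal A}_{n,\lambda}{\mathcal B}_{n,\lambda}$ cancel and that the survivors regroup into the squares $(\tfrac A\lambda+\lambda B{\mathcal A}_{n,\lambda})^2$, $A^2{\mathcal A}_{n,\lambda}^2$ and $(A^2+\lambda^2B^2){\mathcal B}_{n,\lambda}^2$ occurring in (\ref{Functional}); the scalar prefactor of (\ref{Functional}) is then obtained by assembling $\tfrac{\pi}{|\rho_0|^2}$, the factor $(16\lambda^4\mu_n)^{-1}$, the doubling $\|F_1\|_2=\|F_2\|_2$, and the $1/\lambda$ from the integrals.
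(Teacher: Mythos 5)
Your argument is correct and follows essentially the same route as the paper: reduction of ${\mathcal F}_n$ to unweighted $L_2(\mathbb R)$-approximation by $\mathrm{span}\{\Phi_1,\dots,\Phi_n\}$ with the partial Fourier sum $S_n$ (hence $p=T_{n-1}$) as minimizer, then evaluation of $\|R_n-S_n\|_2^2$ from (\ref{R-S}) by splitting it into a part with poles only in $\mathbb C_+$ and a part analytic in $\mathbb C_+$ — your $F_1,F_2$ are the paper's $\tfrac{1}{4\lambda^2}\sum_j W_j$ and $\tfrac{1}{4\lambda^2}\sum_j Q_j$, your orthogonality and equal-norm observations are precisely (\ref{Zero}) and (\ref{W=Q}), and your $D,Q$ bookkeeping condenses the paper's nine pairwise integrals into the same three residue computations. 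One caveat: assembling your (correct) prefactors yields $\tfrac{\pi}{8\lambda^{5}\mu_n|\rho_0|^{2}}$ times the bracket rather than the $\tfrac{2\pi}{\lambda\mu_n}$ displayed in (\ref{Functional}) — a discrepancy already present in the paper itself, whose passage from (\ref{|R-S|2}) to (\ref{|R-S|2=}) silently drops the factor $\tfrac{1}{16\lambda^{4}}$ and whose statement ignores the normalization $\rho_0$ — so do not be alarmed that your constant does not match the printed one.
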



\begin{proof}

Введемо позначення:
\begin{equation}\label{W1}
  W_1(t):=W_1(t,\lambda,A,n)=\frac{A {\mathrm i}}{\lambda\tau_n ({\mathrm i}\lambda)}\cdot \frac 1{{\mathrm i}\lambda-t};
\end{equation}
\begin{equation}\label{W2}
  W_2(t):=W_2(t,\lambda,A,B,n)=- \frac{A+{\mathrm i}\lambda B }{\tau_n ({\mathrm i}\lambda)}
  \cdot \frac 1{({\mathrm i}\lambda-t)^2} ;
\end{equation}
\begin{equation}\label{W3}
  W_3(t):=W_3(t,\lambda,A,B,n)=\frac{(A+{\mathrm i}\lambda B) \cdot{\Sigma}_{n,\lambda} }
  {\tau_n ({\mathrm i}\lambda)}\cdot \frac 1{{\mathrm i}\lambda-t};
\end{equation}
\begin{equation}\label{Q1}
  Q_j(t):=\frac{\nu_n (t)}{\tau_n(t)}\cdot   \overline{W}_j(t),\quad j=1,2,3.
\end{equation}

В прийнятих позначеннях на підставі співвідношення (\ref{R-S}) маємо
\begin{equation}\label{|R-S|2}
    \int_{-\infty}^{\infty} |R_n(t,\lambda)-S(t,R_n)|^2~\mathrm{d}t
    =
    \frac{1}{16 \lambda^4}\int_{-\infty}^{\infty}\Bigg| \sum_{j=1}^{3} \Big(W_j(t)+Q_j(t)\Big) \Bigg|^2~\mathrm{d}t
      =  $$ $$     =\frac{1}{16 \lambda^4}\int_{-\infty}^{\infty} \Bigg[ \sum_{j=1}^{3} \Big( W_j(t)+      Q_j(t)\Big) \Bigg] \Bigg[ \sum_{j=1}^{3} \Big(\overline{W_j(t)}+
       \overline{Q_j(t)}\Big)\Bigg]~\mathrm{d}t.
\end{equation}

Обчислимо інтеграл у правій частині рівності (\ref{|R-S|2}).

При довільних $t\in \mathbb{R}$ має місце рівність $\Big| \frac{\nu_n(t)}{\tau_n(t)}\Big|=1$. Крім того,
$$
    |\tau_n({\mathrm{i}\lambda})|^2=|\nu_n({-\mathrm{i}\lambda})|^2=\prod_{k=1}^{n}
    [\alpha_k^2+(\beta_k+\lambda)^2]:=\mu_n.
$$
Тому з огляду на позначення (\ref{W1})--(\ref{Q1}) при всіх $j,k=1,2,3$ маємо
 \begin{equation}\label{Zero}
    \int_{-\infty}^{\infty} Q_j(t) \overline{W_k (t)}~\mathrm{d}t=
    \int_{-\infty}^{\infty} W_j(t) \overline{Q_k (t)}~\mathrm{d}t=0,
 \end{equation}
 \begin{equation}\label{W=Q}
    \int_{-\infty}^{\infty} W_j\overline{W_k (t)}~\mathrm{d}t=
    \int_{-\infty}^{\infty} Q_k\overline{Q_j (t)}~\mathrm{d}t.
\end{equation}
Отже, для обчислення інтегралу в правій частині рівності (\ref{|R-S|2}) достатньо знайти лише інтеграли вигляду $\int_{-\infty}^{\infty} W_j\overline{W_k (t)}~\mathrm{d}t$.

Оскільки
$$
   \int_{-\infty}^{\infty} \frac{{\mathrm d}t}{t^2+\lambda^2}=\frac{\pi}{\lambda}, \quad\quad
    \int_{-\infty}^{\infty} \frac{{\mathrm d}t}{(t^2+\lambda^2)^2}=\frac{\pi}{2\lambda^3},
$$
то
\begin{equation}\label{W1W1}
    \int_{-\infty}^{\infty} W_1(t) \overline{W_1(t)}~\mathrm{d}t 
    =\frac{A^2 \pi}{\lambda^3 \mu_n},
\end{equation}
\begin{equation}\label{W2W2}
    \int_{-\infty}^{\infty} W_2(t) \overline{W_2(t)}~\mathrm{d}t 
    = \pi\frac{A^2+\lambda^2B^2}{2\lambda^3 \mu_n},
\end{equation}
а
\begin{equation}\label{W3W3}
    \int_{-\infty}^{\infty} W_3(t) \overline{W_3(t)}~\mathrm{d}t 
    =\pi\frac{A^2+\lambda^2B^2}{\lambda \mu_n}|\Sigma_{n,\lambda}|^2.
\end{equation}

Розглянемо тепер інтеграл
\[
    \int\limits_{-\infty}^{\infty} W_2(t) \overline{W_1(t)}~\mathrm{d}t
    =-\frac{(A+\mathrm{i}\lambda B)A \mathrm{i}}{ \lambda |\tau_n(\mathrm{i} \lambda)|^2}
    \int\limits_{-\infty}^{\infty} \frac{{\mathrm d}t}{(\mathrm{i}\lambda-t)^2 (\mathrm{i}\lambda+t)}.
\]

Підінтегральна функція $
\frac{1}{(\mathrm{i}\lambda-t)^2 (\mathrm{i}\lambda+t)},$ як функція комплексної змінної, має у верхній півплощині нуль $t=\mathrm{i}\lambda$ кратності 2, а
при $t\to\infty$ порядок $\mathcal{O}(|t|^{-3}).$ Застосовуючи теорему про лишки, отримуємо
\begin{equation}\label{int-w2w1}
  \int\limits_{-\infty}^{\infty} \frac{{\mathrm d}t}{(\mathrm{i}\lambda-t)^2 (\mathrm{i}\lambda+t)}
    =\frac{\pi \mathrm{i}}{2 \lambda^2},
\end{equation}
і тому
\begin{equation}\label{W2W1=}
    \int\limits_{-\infty}^{\infty} W_2(t) \overline{W_1(t)}~\mathrm{d}t =
    -\frac{(A+\mathrm{i}\lambda B)A \mathrm{i}}{ \lambda |\tau_n(\mathrm{i} \lambda)|^2} \cdot\frac{\pi \mathrm{i}}{2 \lambda^2}=
    \frac{\pi A(A+\mathrm{i}\lambda B)}{2 \lambda^3 \mu_n}.
\end{equation}

Перейдемо до обчислення інтегралу
\[
    \int\limits_{-\infty}^{\infty} W_1(t) \overline{W_2(t)}~\mathrm{d}t
    =-\frac{(A-\mathrm{i}\lambda B)A \mathrm{i}}{ \lambda |\tau_n(\mathrm{i} \lambda)|^2}
    \int\limits_{-\infty}^{\infty} \frac{{\mathrm d}t}{(\mathrm{i}\lambda-t) (\mathrm{i}\lambda+t)^2}.
\]
Підінтегральна функція $
\frac{1}{(\mathrm{i}\lambda-t) (\mathrm{i}\lambda+t)^2},$ як функція комплексної змінної, має у верхній півплощині простий нуль $t=\mathrm{i}\lambda$, а при $t\to\infty$ порядок $\mathcal{O}(|t|^{-3}).$ Тоді аналогічно отримуємо
\begin{equation}\label{int-w1w2}
  \int\limits_{-\infty}^{\infty} \frac{{\mathrm d}t}{(\mathrm{i}\lambda-t) (\mathrm{i}\lambda+t)^2}
   =\frac{\pi \mathrm{i}}{2 \lambda^2},
\end{equation}
звідки 
\begin{equation}\label{W1W2=}
    \int\limits_{-\infty}^{\infty} W_1(t) \overline{W_2(t)}~\mathrm{d}t =
    \frac{\pi A(A-\mathrm{i}\lambda B)}{2 \lambda^3 \mu_n}.
\end{equation}

Далі знаходимо
\begin{equation}\label{W3W1}
    \int\limits_{-\infty}^{\infty} W_3(t) \overline{W_1(t)}~\mathrm{d}t
    =\frac{(A+\mathrm{i}\lambda B)A \mathrm{i}}{ \lambda |\tau_n(\mathrm{i} \lambda)|^2}\cdot
    \Sigma_{n,\lambda}\cdot
    \int\limits_{-\infty}^{\infty} \frac{{\mathrm d}t}{(\mathrm{i}\lambda-t) (\mathrm{i}\lambda+t)}=
$$
$$
    =\frac{(A+\mathrm{i}\lambda B)A \mathrm{i}}{ \lambda |\tau_n(\mathrm{i} \lambda)|^2}\cdot
    \Sigma_{n,\lambda}\cdot
    \int\limits_{-\infty}^{\infty} \frac{-{\mathrm d}t}{t^2+\lambda^2}
    =-\frac{\pi(A+\mathrm{i}\lambda B)A \mathrm{i}}{ \lambda^2 \mu_n}\cdot
    \Sigma_{n,\lambda},
\end{equation}
і, аналогічно,
\begin{equation}\label{W1W3}
    \int\limits_{-\infty}^{\infty} W_1(t) \overline{W_3(t)}~\mathrm{d}t
    =\frac{(A-\mathrm{i}\lambda B)A \mathrm{i}}{ \lambda |\tau_n(\mathrm{i} \lambda)|^2}\cdot
    \overline{\Sigma}_{n,\lambda}\cdot
    \int\limits_{-\infty}^{\infty} \frac{{\mathrm d}t}{(\mathrm{i}\lambda-t) (-\mathrm{i}\lambda-t)}=
$$
$$
    =\frac{(A-\mathrm{i}\lambda B)A \mathrm{i}}{ \lambda |\tau_n(\mathrm{i} \lambda)|^2}\cdot
    \overline{\Sigma}_{n,\lambda}\cdot
    \int\limits_{-\infty}^{\infty} \frac{{\mathrm d}t}{t^2+\lambda^2}
    =\frac{\pi(A-\mathrm{i}\lambda B)A \mathrm{i}}{ \lambda^2 \mu_n}
    \cdot\overline{\Sigma}_{n,\lambda},
\end{equation}

Нарешті, з урахуванням рівностей (\ref{int-w2w1}) i (\ref{int-w1w2}), знаходимо
\begin{equation}\label{W3W2}
    \int\limits_{-\infty}^{\infty} W_3(t) \overline{W_2(t)}~\mathrm{d}t
    =-\frac{(A^2+\lambda^2 B^2)}{  |\tau_n(\mathrm{i} \lambda)|^2}
    \cdot\Sigma_{n,\lambda}\cdot
    \int\limits_{-\infty}^{\infty} \frac{{\mathrm d}t}{(\mathrm{i}\lambda-t) (\mathrm{i}\lambda+t)^2}=
$$
$$
    =- \frac{\pi(A^2+\lambda^2 B^2)}{ 2\lambda^2 \mu_n}\mathrm{i}\cdot
    \Sigma_{n,\lambda}
\end{equation}
і
\begin{equation}\label{W2W3}
    \int\limits_{-\infty}^{\infty} W_2(t) \overline{W_3(t)}~\mathrm{d}t
    =-\frac{(A^2+\lambda^2 B^2)}{  |\tau_n(\mathrm{i} \lambda)|^2}
    \cdot\overline{\Sigma}_{n,\lambda}\cdot
    \int\limits_{-\infty}^{\infty} \frac{-{\mathrm d}t}{(\mathrm{i}\lambda-t)^2 (\mathrm{i}\lambda+t)}=
$$
$$
    = \frac{\pi(A^2+\lambda^2 B^2)}{ 2\lambda^2 \mu_n}\mathrm{i}
    \cdot\overline{\Sigma}_{n,\lambda}.
\end{equation}

Враховуючи позначення (\ref{sigma}) та (\ref{An_Bn}), бачимо, що
 $$
 |{\Sigma}_{n,\lambda}|^2={\mathcal A}_{n,\lambda}^2+{\mathcal B}_{n,\lambda}^2,
 $$
$$
    \frac{\pi(A-\mathrm{i}\lambda B)A \mathrm{i}}{ \lambda^2 \mu_n}\cdot
    \overline{\Sigma}_{n,\lambda}-
    \frac{\pi(A+\mathrm{i}\lambda B)A \mathrm{i}}{ \lambda^2 \mu_n}\cdot
    \Sigma_{n,\lambda}
    =\frac{2\pi AB}{\lambda \mu_n} \cdot{\mathcal A}_{n,\lambda}+
    \frac{2\pi A^2}{\lambda^2 \mu_n}\cdot {\mathcal B}_{n,\lambda}
$$
i
$$
    \frac{\pi(A^2+\lambda^2 B^2)}{ 2\lambda^2 \mu_n}\cdot\mathrm{i}
    \cdot\overline{\Sigma}_{n,\lambda}-
    \frac{\pi(A^2+\lambda^2 B^2)}{ 2\lambda^2 \mu_n}\cdot\mathrm{i}
    \cdot\Sigma_{n,\lambda}
    =\frac{\pi(A^2+\lambda^2 B^2)}{ \lambda^2 \mu_n} \cdot{\mathcal B}_{n,\lambda}.
$$
Тому об'єднуючи співвідношення (\ref{|R-S|2}--(\ref{W3W3}), (\ref{W2W1=}) і (\ref{W1W2=})--(\ref{W2W3}),  одержуємо рівність
\begin{equation}\label{|R-S|2=}
    \int_{-\infty}^{\infty} |R_n(t,\lambda)-S(t,R_n)|^2~\mathrm{d}t=\frac{2 \pi}{\lambda \mu_n}
    \Bigg[\frac{2A^2 }{\lambda^2}+ \frac{A^2+\lambda^2B^2}{2\lambda^2}
   $$
   $$
   +2AB {\mathcal A}_{n,\lambda}+ \frac{2A^2}{\lambda}{\mathcal B}_{n,\lambda}+\frac{A^2+\lambda^2 B^2}{ \lambda}  {\mathcal B}_{n,\lambda}
   + (A^2+\lambda^2B^2)\Big[{\mathcal A}_{n,\lambda}^2+{\mathcal B}_{n,\lambda}^2\Big]\Bigg]
   $$
   $$
   =\frac{2 \pi}{\lambda \mu_n}
    \Bigg[ \Big[\frac{A}{\lambda}+ \lambda B{\mathcal A}_{n,\lambda}\Big]^2+ \frac{3A^2+\lambda^2B^2}{2\lambda^2}
   $$
   $$
   +  \frac{3A^2+\lambda^2 B^2}{ \lambda}  {\mathcal B}_{n,\lambda}
   + A^2 {\mathcal A}_{n,\lambda}^2
   + (A^2+\lambda^2B^2){\mathcal B}_{n,\lambda}^2\Bigg].
\end{equation}

Зауважимо, що будь-який поліном $p\in P_{n-1}$ може бути зображений у вигляді
\[
    p(t)=\tau_n(t) \sum_{k=1}^{n} b_k \Phi_k(t),
\]
з певними коефіцієнтами $\{b_k\}_{k=1}^n$, і навпаки, для будь-якого набору коефіцієнтів
$\{b_k\}_{k=1}^n$ вираз з правої частини останьої рівності є певним поліномом $p\in P_{n-1}.$ Отже, беручи до уваги визначення функції $R_n(z, \lambda)$ (\ref{R_n_Diff}) -- (\ref{pi-tau}), можна стверджувати, що функціонал ${\mathcal F}_n(p, \lambda)$ має зображення
\begin{equation}\label{A_n-represent}
    {\mathcal F}_n(p, \lambda)=|\rho_0|^2 \int_{-\infty}^{\infty}
    \Big|R_n(t, \lambda)- \sum_{k=1}^{n} b_k \Phi_k(t)\Big|^2 \mathrm{d}t
\end{equation}
з певними значеннями $\{b_k\}_{k=1}^n$, і навпаки, для будь-якого набору сталих
$\{b_k\}_{k=1}^n$ вираз з правої частини  рівності (\ref{A_n-represent}) є значенням функціоналу для певного полінома $p\in P_{n-1}.$

Але згідно з формулою  (\ref{R-S}) леми \ref{lemma-R-S} величина $S_n(t, R_n)$ є частинною сумою порядку $n$ ряду Фур'є функції $R_n(z, \lambda)$ по ортогональній на всій дійсній осі системі $\{\Phi_k(t)\}_{k=1}^\infty.$ Тому з рівностей (\ref{|R-S|2=}) і
(\ref{A_n-represent}) випливає, що для довільного поліному $p\in P_{n-1}$
\begin{equation}\label{A_n>}
    {\mathcal F}_n(p, \lambda) \ge \int_{-\infty}^{\infty} \Bigg| \frac{A+Bt}{(t^2+\lambda^2)^2}-
    T_{n-1}(t, \lambda) \Bigg|^2 \frac{\mathrm{d}t}{|\rho(t)|^2}=\frac{2 \pi}{\lambda \mu_n}
    \Bigg[ \Big[\frac{A}{\lambda}+ \lambda B{\mathcal A}_{n,\lambda}\Big]^2
   $$
   $$
  + \frac{3A^2+\lambda^2B^2}{2\lambda^2} +  \frac{3A^2+\lambda^2 B^2}{ \lambda}  {\mathcal B}_{n,\lambda}
   + A^2 {\mathcal A}_{n,\lambda}^2
   + (A^2+\lambda^2B^2){\mathcal B}_{n,\lambda}^2\Bigg].
\end{equation}

При цьому, знак рівності можливий лише у тому випадку, коли
\[
    \sum_{k=1}^{n} b_k \Phi_k(t)=S_n(t, \lambda),
\]
або, що те саме, $p(t)=T_{n-1}(t, \lambda).$

\end{proof}

\bibliographystyle{plain}
\renewcommand{\refname}{Література}

\end{document}